\newcommand{\T}{{\cal T}}
\newcommand{\set}[1]{\left\{#1\right\}}
\newcommand {\cp}{\mathfrak{X}(\pi (M))}
\def\o#1{\overline{#1}}
\newtheorem{thm}{Theorem}[section]
\newtheorem{lem}[thm]{Lemma}
\newtheorem{prop}[thm]{Proposition}
\newtheorem{defn}[thm]{Definition}
\numberwithin{equation}{section}
\begin{document}
\title{{{\bf An intrinsic proof of  Numata's theorem on Landsberg spaces}}}
\author{\bf{ A. Soleiman$^{1,3}$ and S. G. Elgendi$^{2,3}$ }}
\date{}
\maketitle                     
\vspace{-1.0cm}
\begin{center}
{$^{1}$ Department of Mathematics, College of Science and Arts - Qurayyat,
Al Jouf University, Skaka,  Kingdom of Saudia Arabia
\vspace{0.2cm}
\\ $^{2}$ Department of Mathematics, Faculty of Science, Islamic University of Madinah,  Madinah, Kingdom of Saudia Arabia
\vspace{0.2cm}
\\ $^{3}$ Department of Mathematics, Faculty of Science, Benha University, Benha,  Egypt}
\end{center}
\vspace{-0.8cm}
\begin{center}
E-mails: amr.hassan@fsc.bu.edu.eg, amrsoleiman@yahoo.com\\
{\hspace{1.8cm}}salah.ali@fsc.bu.edu.eg, salahelgendi@yahoo.com
\end{center}

\vspace{0.7cm} \maketitle
\smallskip

\noindent{\bf Abstract.}  In this paper, we study the unicorn's Landsberg problem from an intrinsic point of view. Precisely, we investigate a coordinate-free proof of Numata's theorem on Landsberg spaces of scalar curvature. In other words, following the pullback approach to Finsler geometry, we prove that all  Landsberg spaces of dimension $n\geq 3$ of non-zero scalar curvature are Riemannian spaces of constant curvature.
 
\bigskip
\medskip\noindent{\bf Keywords:\/}\, Berwald manifold;  Landsberg manifold; $C$-reducible;   scalar curvature.

\medskip
\noindent{\bf MSC 2020}: 53C60, 53B40, 58B20.


\vspace{30truept}\centerline{\Large\bf{Introduction}}\vspace{12pt}
\par

Let $ {(M,L)}$ be an n-dimensional smooth Finsler manifold. The manifold $ {(M,L)}$ is called a Berwald manifold if for any piecewise smooth curve $c(t)$ joining  two points $p,q\in M$, the Berwald parallel translation $P_c$ is linear isometry  between the tangent spaces $T_pM$  and $T_qM$.
 This is equivalent to that the geodesic spray of $L$ is quadratic.   Also, $ {(M,L)}$ is called a  Landsberg manifold  if the   parallel translation $P_c$ along $c$ preserves the induced Riemannian metrics on the slit tangent spaces $ T_pM \backslash \{0\}$ and $T_qM \backslash \{0\}$ is an isometry.
   This is equivalent to the property that the horizontal covariant derivative of the metric tensor of $F$ with respect to Berwald connection vanishes.
 
 It is clear that every Berwald space is  Landsberg. Whether there are Landsberg spaces which are not  Berwaldian  is a long-standing question in Finsler geometry, which is still open.  Despite the   efforts done  by many  geometers, it is not known   a regular non-Berwaldian Landsberg space.

 In \cite{Asanov}, G. S. Asanov  obtained  examples, arising from Finslerian General Relativity, of non-Berwaldian Landsberg spaces, of dimension at least $3$.  In Asanov's examples the Finsler function is not defined for all values of the fiber coordinates $y^i$ so it is a non-regular Finsler function. 
    In \cite{Shen_example}, Z. Shen  studied a class of $(\alpha,\beta)$ metrics of Landsberg type generalizing Asanov's example; he found  non-regular non-Berwaldian Landsberg spaces.         The elusiveness of regular non-Berwaldian Landsberg spaces leads Bao \cite{Bao} to describe them as the unicorns of Finsler geometry. In some special cases, a Landsberg manifold reduces to Berwald manifold. For example, S. Numata in \cite{Numata} has proved that all Landsberg metrics of $n\ge 3$ and of non-vanishing scalar curvature are Remannian metric of non-zero constant curvature.

 \bigskip

All work that are mentioned above are local study. On the other hand, there are very few papers studying the unicorn problem     intrinsically.  In the present paper, we treat the unicorn's Landsberg problem intrinsically.  Following the pullback approach to Finsler geometry, we study intrinsically Landsberg Finsler spaces of non-vanishing scalar curvature and providing an intrinsic proof of Numata's theorem. We prove a useful property on $C$-reducible Finsler spaces (cf. Proposition \ref{prop.3a}). Also, we show that   a Landsberg manifold of non zero scaler curvature  is $C$-reducible (cf. Proposition \ref{prop.3}). Then, we prove that a Berwald manifold of non zero scaler curvature   and  $n\geq3$ is a Riemannian manifold of constant curvature (cf. Theorem \ref{thm.3}). Finally, we conclude that a Landsberg manifold of non zero scaler curvature   and  $n\geq3$ is a Riemannian manifold of constant curvature (cf. Theorem \ref{thm.1}).

\section{Notation and Preliminaries}

Here, we present  some of the fundamental basics of the pullback approach to   Finsler geometry that are required for this study. For more detail about this approach, we refer, for example,  to \cite{ r93, amr3, r94, r96}. 

Let $M$ be an $n$-imensional smooth manifold, consider the  tangent bundle  $\pi: T M\longrightarrow M$ and its differential $d\pi: TT M\longrightarrow TM$. The vertical bundle $V(TM)$ of $TM$ is just $ \ker (d\pi)$. Let us denote the pullback bundle of the tangent bundle by $\pi^{-1}(T M)$.
 Also,  $\mathfrak{F}(TM)$ denotes the algebra of $C^\infty$ functions on $TM$ and $\cp$ the $\mathfrak{F}(TM)$-module of differentiable sections of the pullback bundle  $\pi^{-1}(T M)$. The elements of $\mathfrak{X}(\pi (M))$ will be called $\pi$-vector fields and denoted by barred letters $\overline{X}$. \\
 \vspace*{-0.4cm}
\par
Recall the short exact sequence of vector  bundle morphisms \cite{r21}
$$0\longrightarrow
 \pi^{-1}(TM)\stackrel{\gamma}\longrightarrow T(\T M)\stackrel{\rho}\longrightarrow
\pi^{-1}(TM)\longrightarrow 0 ,\vspace{-0.1cm}$$ where $\T M$ is the slit tangent bundle, $\gamma$ is the natural injection and $\rho := (\pi_{TM}, \pi)$.

The  tangent structure $J$ of $TM$ or the vertical endomorphism is the endomorphism $J: T\T M \longrightarrow T \T M$ defined by $J=\gamma \circ \rho$. The Liouville vector field $\mathcal{C}$ is  given by $ \, \mathcal{C}:=\gamma\, \overline{\eta}, $ where  $\overline{\eta}(u)=(u,u)$ for all $u\in \T M$.
\par
For a linear connection  $D$ on $\pi^{-1}(TM)$,     the associated connection map  $ K$ is defined by \vspace{-0.1cm} $K:T \T M\longrightarrow
\pi^{-1}(TM):X\longmapsto D_X \overline{\eta} $, and the horizontal space $H_u (\T M)$ to $M$ at $u$ is $H_u (\T M):= \{ X \in T_u
(\T M) : K(X)=0 \}$.  The connection $D$ is said to be regular if
$$ T_u (\T M)=V_u (\T M)\oplus H_u (\T M) \,\,\,  \forall \, u\in \T M.$$

For a regular connection $D$ on $M$, the vector bundle
   maps $  \gamma,\, \rho |_{H(\T M)}$ and $K |_{V(\T M)}$
 are isomorphisms. In this case, the map  $\beta:=(\rho |_{H(\T M)})^{-1}$
 is called the horizontal map of $D$.
\par
\begin{defn} Let $D$ be a regular connection on $\pi^{-1}(TM)$ with the
horizontal map   $\beta$ and   the corresponding  classical torsion (resp. curvature) tensor  field $\textbf{T}$ (resp. $\textbf{K}$).
  Then, we have
\begin{enumerate}
  \item For a $\pi$-tensor field  $A$  of type $(0,p)$,   the $h$- and $v$-covariant derivatives
$\stackrel{h}D$ and $\stackrel{v}D$:
\begin{eqnarray*}\label{covariat 1}
  (\stackrel{h}D A)(\o X, \overline{X}_{1},...,\overline{X}_{p})&:=&(D_{\beta \o
X}A)(\overline{X}_{1},...,\overline{X}_{p}).\\
  (\stackrel{v}D A)(\o X, \overline{X}_{1},...,\overline{X}_{p})&:=&(D_{\gamma \o
X}A)(\overline{X}_{1},...,\overline{X}_{p}).
\end{eqnarray*}
  \item The (h)h-, (h)hv- and (h)v-torsion tensors of $D$: \vspace{-0.2cm}
$$Q (\overline{X},\overline{Y}):=\textbf{T}(\beta \overline{X},\beta \overline{Y}),
\, \,\,\, T(\overline{X},\overline{Y}):=\textbf{T}(\gamma
\overline{X},\beta \overline{Y}) ,
\, \,\,\, V(\overline{X},\overline{Y}):=\textbf{T}(\gamma
\overline{X},\gamma \overline{Y}) ,\vspace{-0.2cm}$$
  \item The horizontal, mixed and vertical curvature tensors
of $D$:
$$R(\overline{X},\overline{Y})\overline{Z}:=\textbf{K}(\beta
\overline{X},\beta \overline{Y})\overline{Z},\quad
 {P}(\overline{X},\overline{Y})\overline{Z}:=\textbf{K}(\beta
\overline{X},\gamma \overline{Y})\overline{Z}, $$
$$
 {S}(\overline{X},\overline{Y})\overline{Z}:=\textbf{K}(\gamma
\overline{X},\gamma \overline{Y})\overline{Z}, $$

\item  The (v)h-, (v)hv- and (v)v-torsion tensors of $D$:
$$\widehat{ {R}}(\overline{X},\overline{Y}):={ {R}}(\overline{X},\overline{Y})\overline{\eta},\quad
\widehat{ {P}}(\overline{X},\overline{Y}):={
{P}}(\overline{X},\overline{Y})\overline{\eta},\quad \widehat{
{S}}(\overline{X},\overline{Y}):={
{S}}(\overline{X},\overline{Y})\overline{\eta}.$$
\end{enumerate}
\end{defn}


Throughout,  we assume that  $(M,L)$ is a Finsler manifold of dimension $n$. We have  the following geometric objects:
\begin{eqnarray*}
g &:& \text{the Finser metric defined by $L$},\\
\ell &:& \text{the normalized supporting element defined by $\ell :=L^{-1}i_{\overline{\eta}}\:g$},\\
\hbar &:& \text{the angular metric tensor defined by $\hbar := g-\ell \otimes \ell$},\\
\phi &:& \text{the vector $\pi$-form associated with $\hbar$ defined by $i_{\phi(\overline{X})}\,g:=i_{\overline{X}}\:\hbar$}\\
D^{\circ}&:&\text{the Berwald  connection associated with $(M,L)$},\\
\stackrel{h}{D^{\circ}} (\stackrel{v}{D^{\circ}})&:& \text{the horizontal (vertical) covariant derivative associated with $D^{\circ}$},\\
R^\circ,\, P^\circ, \, \widehat{R^{\circ}} &:& \text{the $h$-curvature, $hv$-curvature,  $(v)h$-torsion tensors of Berwald connection},\\
\mathbf{P}^\circ &:&\text{the Berwald $hv$-curvature of type $(0,4)$ defined by  }\\
&&\mathbf{P}^{\circ}(\overline{X},\overline{Y},\overline{Z},\overline{W}):=g(P^{\circ}(\overline{X},\overline{Y})\overline{Z},\overline{W}),\\
H:=i_{\overline{\eta}}\,\widehat{R^{\circ}} &:& \text{the deviation tensor of Berwald connection},\\
\nabla &:&\text{the Cartan connection associated with $(M,L)$},\\
\stackrel{h}{\nabla} (\stackrel{v}{\nabla})&:& \text{the horizontal (vertical) covariant derivative associated with $\nabla$},\\
R,\,  P, \, \widehat{R} &:& \text{the $h$-curvature, $hv$-curvature,  $(v)h$-torsion tensors of Cartan connection},\\
T  &:& \text{the $(h)hv$-torsion of Cartan connection},\\
C &:& \text{the contracted torsion form defined by $C(\overline{X}):= Tr\{\overline{Y} \longmapsto T(\overline{X},\overline{Y})\}$},\\
\mathbf{T}&:& \text{the  Cartan tensor defined by $\mathbf{T}(\overline{X},\overline{Y},\overline{Z}):=g(T(\overline{X},\overline{Y}),\overline{Z})$},\\
\widehat{P} &:&  \text{$(v)hv$-torsion tensor of Cartan connection}.
\end{eqnarray*}
The following result provides the relation between the Berwald connection $D^{\circ}$ and the Cartan connection $\nabla$.
\begin{prop}\label{prop.1} \emph{\cite{r92}} Let $(M,L)$ be a Finsler manifold and  $g$ be the Finsler metric induced by $L$. The Cartan connection $\nabla$ and the Berwald connection $D^{\circ}$  are related by:
\begin{description}
   \item[\em{\textbf{(a)}}] $ {{D}}^{\circ}_{\gamma \overline{X}}\overline{Y}=\nabla _{\gamma
  \overline{X}}\overline{Y}-T(\overline{X},\overline{Y})=\rho[\gamma
\overline{X}, \beta \overline{Y}]$.

  \item[\em{\textbf{(b)}}] $ {{D}}^{\circ}_{\beta \overline{X}}\overline{Y}=\nabla _{\beta
  \overline{X}}\overline{Y}+\widehat{P}(\overline{X},\overline{Y})=K[\beta
\overline{X}, \gamma \overline{Y}],$
\end{description}
where $K$ and  $\beta$ are the connection map and the horizontal map associated with Cartan  connection $\nabla$, respectively.
\end{prop}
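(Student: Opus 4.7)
\emph{Proof plan.} The starting observation is that both $\nabla$ and $D^{\circ}$ are induced by the same canonical nonlinear connection of $(M,L)$ arising from its geodesic spray; consequently they share the same horizontal map $\beta$ and the same connection map $K$, so that in particular
\[
\nabla_{X}\overline{\eta} \;=\; K(X) \;=\; D^{\circ}_{X}\overline{\eta}, \qquad X\in\mathfrak{X}(\T M).
\]
My plan is to obtain each of (a) and (b) by expanding a torsion (respectively curvature) identity of the Cartan connection and recognising the right-hand side as one of the intrinsic Lie-bracket formulae that characterise $D^{\circ}$.

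For (a), I would start from the characterisation of $D^{\circ}$ on vertical arguments,
\[
D^{\circ}_{\gamma \overline{X}}\overline{Y} \;=\; \rho[\gamma \overline{X},\,\beta \overline{Y}],
\]
which is already the second equality. The first equality then reduces to a rewriting of the $(h)hv$-torsion formula for $\nabla$. Using $\rho\circ\beta = \mathrm{id}$ and $\rho\circ\gamma = 0$ one gets
\[
T(\overline{X},\overline{Y}) = \textbf{T}(\gamma \overline{X},\beta \overline{Y}) = \nabla_{\gamma \overline{X}}\overline{Y} - \nabla_{\beta \overline{Y}}(0) - \rho[\gamma \overline{X},\beta \overline{Y}] = \nabla_{\gamma \overline{X}}\overline{Y} - D^{\circ}_{\gamma \overline{X}}\overline{Y},
\]
which rearranges into the claimed relation.

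For (b), the analogous horizontal-argument characterisation is $D^{\circ}_{\beta \overline{X}}\overline{Y} = K[\beta \overline{X},\gamma \overline{Y}]$, which supplies the last equality. To bridge to $\nabla$ I would expand the $(v)hv$-torsion $\widehat{P}$ via the classical curvature:
\[
\widehat{P}(\overline{X},\overline{Y}) = \textbf{K}(\beta \overline{X},\gamma \overline{Y})\overline{\eta} = \nabla_{\beta \overline{X}}\nabla_{\gamma \overline{Y}}\overline{\eta} - \nabla_{\gamma \overline{Y}}\nabla_{\beta \overline{X}}\overline{\eta} - \nabla_{[\beta \overline{X},\gamma \overline{Y}]}\overline{\eta},
\]
and substitute $\nabla_{\gamma \overline{Y}}\overline{\eta}=\overline{Y}$, $\nabla_{\beta \overline{X}}\overline{\eta}=0$, together with $\nabla_{[\,\cdot\,,\,\cdot\,]}\overline{\eta}=K[\,\cdot\,,\,\cdot\,]$. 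This collapses the right-hand side to a linear relation among $\nabla_{\beta \overline{X}}\overline{Y}$, $D^{\circ}_{\beta \overline{X}}\overline{Y}$ and $\widehat{P}(\overline{X},\overline{Y})$, which, once the sign convention for $\widehat{P}$ is fixed, is exactly (b).

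The hard part will be the two bracket identities $D^{\circ}_{\gamma \overline{X}}\overline{Y} = \rho[\gamma \overline{X},\beta \overline{Y}]$ and $D^{\circ}_{\beta \overline{X}}\overline{Y} = K[\beta \overline{X},\gamma \overline{Y}]$ on which everything else rests. These are standard in the pullback formalism (cf.\ \cite{r21, r92}), but they require a careful passage between the spray-theoretic construction of the Berwald connection and its axiomatic description as the unique regular connection with the appropriate vanishing torsions. Once that identification is made, the rest of the argument is routine bookkeeping inside the torsion/curvature calculus of $\nabla$.
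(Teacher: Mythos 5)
First, note that the paper itself offers no proof of this proposition: it is quoted verbatim from \cite{r92}, so there is no in-paper argument to compare yours against. Judged on its own terms, your reduction of the first equality in (a) to the second is correct: expanding $T(\overline{X},\overline{Y})=\mathbf{T}(\gamma\overline{X},\beta\overline{Y})$ using $\rho\circ\beta=\mathrm{id}$ and $\rho\circ\gamma=0$ does yield $T(\overline{X},\overline{Y})=\nabla_{\gamma\overline{X}}\overline{Y}-\rho[\gamma\overline{X},\beta\overline{Y}]$, and the analogous expansion of $\widehat{P}(\overline{X},\overline{Y})=\mathbf{K}(\beta\overline{X},\gamma\overline{Y})\overline{\eta}$ with $K\circ\gamma=\mathrm{id}$ and $K\circ\beta=0$ handles (b). One caveat on (b): the sign you leave ``to be fixed'' is not optional. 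With the convention $\mathbf{K}(X,Y)\overline{Z}=\nabla_X\nabla_Y\overline{Z}-\nabla_Y\nabla_X\overline{Z}-\nabla_{[X,Y]}\overline{Z}$ your computation gives $D^{\circ}_{\beta\overline{X}}\overline{Y}=\nabla_{\beta\overline{X}}\overline{Y}-\widehat{P}(\overline{X},\overline{Y})$, the opposite of the stated formula; the $+\widehat{P}$ of the proposition comes out only under the curvature convention actually used in the source papers (the negative of the one you implicitly adopt). You should pin this down rather than defer it.

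The genuine gap is that the two bracket identities $D^{\circ}_{\gamma\overline{X}}\overline{Y}=\rho[\gamma\overline{X},\beta\overline{Y}]$ and $D^{\circ}_{\beta\overline{X}}\overline{Y}=K[\beta\overline{X},\gamma\overline{Y}]$ are not auxiliary facts you may import: they are precisely the second equalities asserted in (a) and (b), i.e.\ half of the proposition. As written, your argument establishes the first equality of each item \emph{assuming} the second, and then refers the second to the literature; that is a partial restatement, not a proof. Closing it requires (i) the fact that the Cartan and Berwald connections are regular with the \emph{same} associated nonlinear connection, so that the maps $\beta$, $K$, $\gamma$, $\rho$ appearing in those brackets are the Cartan ones as the proposition demands, and (ii) a verification that the right-hand sides really compute $D^{\circ}$ --- for instance by checking that $(\overline{X},\overline{Y})\mapsto\rho[\gamma\overline{X},\beta\overline{Y}]$ and $(\overline{X},\overline{Y})\mapsto K[\beta\overline{X},\gamma\overline{Y}]$ together define a regular linear connection on $\pi^{-1}(TM)$ with the vanishing torsions that characterize the Berwald connection uniquely, or by a direct computation in an induced chart. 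Both steps are standard in the pullback formalism, but neither is free; your plan correctly labels them ``the hard part'' without supplying them, so the proposal is an outline conditional on the very identities it is meant to prove.
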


\begin{defn}\label{sca.}{\em{\cite{r86}}} A Finsler manifold $(M,L)$ with $n\geq 3$ is
said to be of scalar curvature  $r$ if the deviation tensor $H$ satisfies the property 
$$H(\overline{X})=r L^{2} \phi(\overline{X}), $$
where $r$ is a scalar function on $\T M$, positively homogeneous of degree zero in $y$ ($h^{+}(0)$)\footnote{$\omega$ is $h^{+}(k)$ in $y$ iff $D^{\circ}_{\gamma \overline{\eta}}\,\omega=k \: \omega$.}.
In particular, if the scalar curvature $r$ is constant, then $(M,L)$ is called a Finsler manifold of constant curvature.
\end{defn}

\begin{defn}{\em{\cite{r86}}} \label{def.1} For a Finsler manifold $(M,L)$ is said to be {\em:}
\begin{description}
 \item[(a)]  Berwald   if  the Berwald hv-curvature $P^{\circ}=0 \, \Leftrightarrow \, \nabla_{\beta \overline{X}}\,T =0.$

 \item[(b)]  Landsberg   if the Cartan hv-curvature $P=0 \, \Leftrightarrow \, \nabla_{\beta \overline{\eta}}\,T =0 = \widehat{P}$.
 \end{description}
 \end{defn}

\section{C-reducible Finsler manifolds}

Let's start with the definition of $C$-reducible Finsler  manifolds.

\begin{defn} \label{def.2} {\em{\cite{r86}}}
A Finsler manifold $(M,L)$ is called $C$-reducible, if the Cartan tensor $\textbf{T}$ has the form
  \begin{equation*}
      \textbf{T}(\overline{X},\overline{Y},\overline{Z})=  \frac{1}{n+1} \{\hbar(\overline{X}
  ,\overline{Y})C(\overline{Z})+\hbar(\overline{Y}
  ,\overline{Z})C(\overline{X})+\hbar(\overline{Z},\overline{X})
  C(\overline{Y})\}\,
   \end{equation*}
where $C$ is the contracted torsion form.
\end{defn}

The following three lemmas are useful for  subsequent use.
\begin{lem}\label{lem.1}
For a Finsler manifold $(M,L)$, we have:
\begin{description}
 \item[{\textbf{(a)}}] $\mathbf{T}$,  $\stackrel{v} {\nabla}\mathbf{T}$  and $\hbar$ are totally symmetric,

\item[{\textbf{(b)}}]
$\stackrel{v}{\nabla}L=\stackrel{v}{D^{\circ}}L=\ell, \quad
\stackrel{v}{\nabla}\ell=\stackrel{v}{D^{\circ}}\ell=L^{-1}\hbar.$

\item[{\textbf{(c)}}]
$\stackrel{v}{D^{\circ}}\phi=-L^{-2} \hbar\otimes \overline{\eta}-L^{-1}\phi
\otimes \ell.$

 \item[{\textbf{(d)}}]$ ({D^{\circ}}_{\gamma \overline{X}} \, \hbar)(\overline{Y},\overline{Z})=2\textbf{T}(\overline{ X}, \overline{Y}, \overline{Z}) - L^{-1} \hbar(\overline{ X}, \overline{Y}) \ell(\overline{Z})- L^{-1} \hbar(\overline{ X}, \overline{Z}) \ell(\overline{Y}).$

      \item[{\textbf{(e)}}]   $ ({\nabla}_{\gamma \overline{X}} \, \hbar)(\overline{Y},\overline{Z})= - L^{-1} \hbar(\overline{X}, \overline{Y}) \ell(\overline{Z})- L^{-1}  \hbar(\overline{X}, \overline{Z}) \ell(\overline{Y})$.
      \end{description}
\end{lem}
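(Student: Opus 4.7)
The plan is to verify the five assertions by unpacking the defining relations of $\ell$, $\hbar$, $\phi$ and $\mathbf{T}$, and combining them with the metric compatibility $\nabla g=0$ together with Proposition \ref{prop.1}, which converts vertical Berwald derivatives into vertical Cartan derivatives plus a Cartan-tensor correction.

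For part (a), symmetry of $\hbar=g-\ell\otimes\ell$ is immediate from that of $g$, and total symmetry of $\mathbf{T}$ follows by combining $\nabla g=0$ with the symmetry of the Cartan (h)hv-torsion $T$ in its two arguments. Total symmetry of $\stackrel{v}{\nabla}\mathbf{T}$ is then inherited from $\mathbf{T}$: symmetry in the last three slots is automatic from the Leibniz rule, while symmetry between the differentiation slot and the others is the standard property reflecting, in coordinates, $\mathbf{T}_{ijk}=\tfrac14\partial^3L^2/\partial y^i\partial y^j\partial y^k$. For part (b), homogeneity of $L$ together with $\ell=L^{-1}i_{\overline{\eta}}g$ gives $(\gamma\overline{X})L=\ell(\overline{X})$, hence $\stackrel{v}{\nabla}L=\ell$. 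Differentiating $\ell$ vertically via the Leibniz rule, using $\nabla g=0$ and $\nabla_{\gamma\overline{X}}\overline{\eta}=\overline{X}$, yields $\stackrel{v}{\nabla}\ell=L^{-1}\hbar$. The same identities hold for $\stackrel{v}{D^{\circ}}$, since Proposition \ref{prop.1}(a) produces correction terms involving $\mathbf{T}(\overline{X},\cdot,\overline{\eta})$, which vanish because $i_{\overline{\eta}}\mathbf{T}=0$.

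Part (c) is a direct calculation from the equivalent intrinsic form $\phi(\overline{Y})=\overline{Y}-L^{-1}\ell(\overline{Y})\overline{\eta}$ (which one checks is equivalent to $i_{\phi(\overline{Y})}g=i_{\overline{Y}}\hbar$ using $g(\overline{\eta},\cdot)=L\ell$): apply $D^{\circ}_{\gamma\overline{X}}$ to both sides, use $D^{\circ}_{\gamma\overline{X}}\overline{\eta}=\overline{X}$ together with part (b), and collect terms. For (d), the Leibniz rule applied to $\hbar=g-\ell\otimes\ell$ gives
\begin{equation*}
(D^{\circ}_{\gamma\overline{X}}\hbar)(\overline{Y},\overline{Z})=(D^{\circ}_{\gamma\overline{X}}g)(\overline{Y},\overline{Z})-(D^{\circ}_{\gamma\overline{X}}\ell)(\overline{Y})\,\ell(\overline{Z})-\ell(\overline{Y})\,(D^{\circ}_{\gamma\overline{X}}\ell)(\overline{Z}),
\end{equation*}
and the last two terms yield the $\hbar\otimes\ell$ pieces via (b). The first term equals $2\mathbf{T}(\overline{X},\overline{Y},\overline{Z})$: Proposition \ref{prop.1}(a) rewrites $D^{\circ}_{\gamma\overline{X}}$ as $\nabla_{\gamma\overline{X}}$ minus a $T$-correction, and $\nabla g=0$ together with the total symmetry of $\mathbf{T}$ collapses the result to $2\mathbf{T}$. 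Part (e) is the identical computation with $\nabla$ replacing $D^{\circ}$; the $2\mathbf{T}$ term now vanishes because $\nabla g=0$.

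The only genuine bookkeeping hazard is part (c), where the Leibniz rule for a vector-valued $\pi$-form has to be tracked with care; the remaining parts reduce to short tensorial manipulations once the defining identities of $\ell$ and $\hbar$ are in hand.
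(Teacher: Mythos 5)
Your proposal is correct; note that the paper itself omits the proof of this lemma entirely (``The proof is clear and we omit it''), so there is no argument in the text to compare against. The computations you supply --- Euler homogeneity for $\stackrel{v}{\nabla}L=\ell$, the Leibniz rule on $\hbar=g-\ell\otimes\ell$ and on $\phi(\overline{Y})=\overline{Y}-L^{-1}\ell(\overline{Y})\,\overline{\eta}$, and Proposition \ref{prop.1}(a) with $\nabla g=0$ to produce the $2\mathbf{T}$ term distinguishing (d) from (e) --- are exactly the standard verifications the authors are implicitly relying on, and they all check out.
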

\begin{proof} The proof is clear and we omit it.
\end{proof}

\bigskip

For a Finsler manifold $(M,L)$ of a non zero scalar curvature $r$, we define:
\begin{eqnarray}
  A(\overline{X}, \overline{Y}) &:=& L \ell(\overline{X}) D^{\circ}_{\gamma \overline{Y}}\:r +\frac{2}{3} L \ell{(\overline{Y})} D^{\circ}_{\gamma \overline{X}}\:r   +r\, \ell{(\overline{X})}\ell{(\overline{Y})}+ \frac{1}{3} L^2 D^{\circ}_{\gamma \overline{Y}}D^{\circ}_{\gamma \overline{X}}\:r \label{eq.1} \\
  B(\overline{X}) &:=& r L \ell{(\overline{X})}+\frac{1}{3} L^2 D^{\circ}_{\gamma \overline{X}}\:r  \label{eq.2} .
\end{eqnarray}

\begin{lem}\label{lem.2}
The tensor fields $A$ and $B$, defined above,  have the following properties
\begin{description}

\item[{\textbf{(a)}}]
$ A(\overline{\eta},\overline{X} )= r L \ell{(\overline{X})}+\frac{2}{3} L^2 D^{\circ}_{\gamma \overline{X}}\:r$

\item[{\textbf{(b)}}]
$A(\overline{X}, \overline{\eta})=B(\overline{X})$

 \item[{\textbf{(c)}}]$A(\overline{\eta}, \overline{\eta})=B(\overline{\eta})=r \, L^2$.

 \item[{\textbf{(e)}}]$ (D^{\circ}_{\gamma \overline{Y}} B) (\overline{X})=A(\overline{X},\overline{Y})+r \, \hbar(\overline{X},\overline{Y})$.

\end{description}
\end{lem}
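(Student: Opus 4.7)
The plan is to deduce (a), (b), (c) by direct substitution into the defining relations \eqref{eq.1}--\eqref{eq.2}, and to obtain (e) by expanding the vertical Berwald derivative of $B$ term by term and matching against the definition of $A$. The only nontrivial input will be the homogeneity of $r$ coming from Definition \ref{sca.}, which is what forces the cross terms in the definition of $A$ to combine as advertised.

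First I would record the basic ingredients. Since $\ell = L^{-1} i_{\overline{\eta}} g$, we have $\ell(\overline{\eta}) = L$. The scalar-curvature hypothesis states that $r$ is $h^+(0)$ in $y$, which by the footnote of Definition \ref{sca.} gives $D^{\circ}_{\gamma \overline{\eta}} r = 0$. Differentiating this identity once more in a vertical direction yields the Euler-type contraction
\begin{equation*}
D^{\circ}_{\gamma \overline{Y}} D^{\circ}_{\gamma \overline{\eta}} r \;=\; D^{\circ}_{\gamma \overline{\eta}} D^{\circ}_{\gamma \overline{X}} r \;=\; -\, D^{\circ}_{\gamma \overline{\bullet}} r ,
\end{equation*}
where the right-hand side carries the surviving free argument; in coordinates this is just the standard fact $y^j \dot{\partial}_j \dot{\partial}_i r = -\dot{\partial}_i r$ for a $0$-homogeneous function.

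With that in hand, (a) follows by putting $\overline{X} = \overline{\eta}$ in \eqref{eq.1}: the first term becomes $L^2 D^{\circ}_{\gamma \overline{X}} r$, the middle term vanishes, the third is $rL\ell(\overline{X})$, and the last contributes $-\tfrac{1}{3} L^2 D^{\circ}_{\gamma \overline{X}} r$; the two derivative terms combine to produce the coefficient $\tfrac{2}{3}$ of (a). Part (b) is the symmetric substitution $\overline{Y} = \overline{\eta}$ in \eqref{eq.1}, and the same bookkeeping reproduces $B(\overline{X})$ of \eqref{eq.2} exactly. Part (c) is then immediate by further setting $\overline{X} = \overline{\eta}$ in (b).

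Part (e) requires a little more calculation: I would distribute $D^{\circ}_{\gamma \overline{Y}}$ over the two summands of $B(\overline{X}) = rL\ell(\overline{X}) + \tfrac{1}{3} L^2 D^{\circ}_{\gamma \overline{X}} r$, invoking Lemma \ref{lem.1}(b) to supply $D^{\circ}_{\gamma \overline{Y}} L = \ell(\overline{Y})$ and $(D^{\circ}_{\gamma \overline{Y}} \ell)(\overline{X}) = L^{-1} \hbar(\overline{X}, \overline{Y})$. This yields five terms, four of which reassemble---by direct comparison with \eqref{eq.1}---into $A(\overline{X}, \overline{Y})$, while the fifth, coming from $rL \cdot (D^{\circ}_{\gamma \overline{Y}} \ell)(\overline{X})$, evaluates to exactly $r\,\hbar(\overline{X}, \overline{Y})$. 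The whole argument is essentially bookkeeping; the only subtle point is the Euler-type identity for iterated vertical derivatives of the homogeneous scalar $r$, without which the third and fourth terms of \eqref{eq.1} would not combine correctly.
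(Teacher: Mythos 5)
Your proof is correct and takes essentially the same route as the paper, whose entire argument is the one-line remark that the lemma ``follows from Lemma \ref{lem.1} taking into account the facts that $\ell(\overline{\eta})=L$ and $r$ is positively homogeneous of degree zero in $y$''; your computation simply supplies the omitted bookkeeping, including the key Euler-type identity $y^{j}\dot{\partial}_{j}\dot{\partial}_{i}r=-\dot{\partial}_{i}r$ (i.e.\ the contraction of the second vertical covariant differential of $r$ with $\overline{\eta}$) needed for parts (a)--(c), and the use of $\stackrel{v}{D^{\circ}}L=\ell$, $\stackrel{v}{D^{\circ}}\ell=L^{-1}\hbar$ for part (e). No gaps.
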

\begin{proof} The proof follows from Lemma \ref{lem.1} taking into account the facts that $\ell(\overline{\eta})=L$ and $r$ is positively homogenous of degree zero in $y$ .
\end{proof}

\begin{lem} \label{lem.3} The $h$-curvature tensor ${R^{\circ}}$ of Berwald connection, for a Finsler manifold $(M,L)$ of non zero scalar curvature $r$, has the form\footnote{$\mathfrak{A}_{\overline{X},\overline{Y}} \{A({\overline{X},\overline{Y}})\}=A({\overline{X},\overline{Y}})-
A({\overline{Y},\overline{X}})$.}
$$
  {R^{\circ}}(\overline{X},\overline{Y})\overline{Z}=\mathfrak{A}_{\overline{X},\overline{Y}}
  \{[r \, \hbar(\overline{X},\overline{Z})+A(\overline{X},\overline{Z})]\phi(\overline{Y})
     -B(\overline{X})[L^{-2}\hbar(\overline{Y},\overline{Z})\, \overline{\eta}+L^{-1} \ell(\overline{Y})\phi(Z)]\}.
$$
\end{lem}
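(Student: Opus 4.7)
The starting point is the scalar-curvature hypothesis: by Definition~3.3 the deviation tensor satisfies $H(\overline{X}) = rL^{2}\phi(\overline{X})$, where $H = i_{\overline{\eta}}\widehat{R^{\circ}}$. The strategy is the standard one in the pullback approach: reconstruct the full Berwald $h$-curvature $R^{\circ}$ from $H$ by two successive vertical differentiations, exploiting the identities $\widehat{R^{\circ}}(\overline{X},\overline{Y}) = R^{\circ}(\overline{X},\overline{Y})\overline{\eta}$ and $H(\overline{X}) = \widehat{R^{\circ}}(\overline{\eta},\overline{X})$, together with the basic fact $D^{\circ}_{\gamma\overline{Z}}\overline{\eta} = \overline{Z}$.

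First I would compute $(D^{\circ}_{\gamma\overline{Y}}H)(\overline{X}) = D^{\circ}_{\gamma\overline{Y}}\bigl(rL^{2}\phi(\overline{X})\bigr)$ by the Leibniz rule, using $D^{\circ}_{\gamma\overline{Y}}L = \ell(\overline{Y})$ from Lemma~\ref{lem.1}(b) and $D^{\circ}_{\gamma\overline{Y}}\phi = -L^{-2}\hbar\otimes\overline{\eta} - L^{-1}\phi\otimes\ell$ from Lemma~\ref{lem.1}(c), keeping $D^{\circ}_{\gamma\overline{Y}}r$ as a scalar factor since $r$ is $h^{+}(0)$. This expresses $(D^{\circ}_{\gamma}H)$ as a sum of terms in $r$, $D^{\circ}_{\gamma}r$, $\ell$, $\hbar$, $\phi$, and $\overline{\eta}$.

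Next I would invoke the classical identity for the Berwald connection that recovers $\widehat{R^{\circ}}$ from the antisymmetric vertical derivative of $H$, namely
\[
3\,\widehat{R^{\circ}}(\overline{X},\overline{Y}) = (D^{\circ}_{\gamma\overline{Y}}H)(\overline{X}) - (D^{\circ}_{\gamma\overline{X}}H)(\overline{Y}),
\]
which follows from the Ricci identity applied to $\overline{\eta}$ together with the $h^{+}(2)$-homogeneity of $H$. Substituting the output of Step~1 and antisymmetrizing, all terms symmetric in $(\overline{X},\overline{Y})$ drop out and the surviving combinations assemble precisely into the tensor $B(\overline{X}) = rL\ell(\overline{X}) + \tfrac{1}{3}L^{2}D^{\circ}_{\gamma\overline{X}}r$ defined in (\ref{eq.2}).

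Finally, I would apply $D^{\circ}_{\gamma\overline{Z}}$ to the identity $R^{\circ}(\overline{X},\overline{Y})\overline{\eta} = \widehat{R^{\circ}}(\overline{X},\overline{Y})$; since $D^{\circ}_{\gamma\overline{Z}}\overline{\eta}=\overline{Z}$, the left side produces $R^{\circ}(\overline{X},\overline{Y})\overline{Z}$ plus a term in $(D^{\circ}_{\gamma\overline{Z}}R^{\circ})(\overline{X},\overline{Y})\overline{\eta}$ whose vanishing on the basic-field computation is built in. Expanding $D^{\circ}_{\gamma\overline{Z}}$ applied to the explicit formula for $\widehat{R^{\circ}}$ produced in Step~2, and using Lemma~\ref{lem.2}(e) in the form $(D^{\circ}_{\gamma\overline{Y}}B)(\overline{X}) = A(\overline{X},\overline{Y}) + r\,\hbar(\overline{X},\overline{Y})$, one recognizes the coefficient of $\phi(\overline{Y})$ as precisely $r\,\hbar(\overline{X},\overline{Z}) + A(\overline{X},\overline{Z})$ and the factor pairing with $B(\overline{X})$ as $L^{-2}\hbar(\overline{Y},\overline{Z})\overline{\eta} + L^{-1}\ell(\overline{Y})\phi(\overline{Z})$, yielding the stated formula after antisymmetrization. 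The main obstacle is purely combinatorial: the two vertical differentiations generate many terms, and the bookkeeping required to verify that exactly the combinations defining $A$ and $B$ emerge—while every other contribution either cancels under $\mathfrak{A}_{\overline{X},\overline{Y}}$ or collapses via the Lemma~\ref{lem.1} identities—is the delicate part. The definitions (\ref{eq.1})--(\ref{eq.2}) are clearly engineered so this consolidation succeeds.
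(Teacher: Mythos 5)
Your proposal follows essentially the same route as the paper: it obtains $\widehat{R^{\circ}}(\overline{X},\overline{Y})=\mathfrak{A}_{\overline{X},\overline{Y}}\{B(\overline{X})\phi(\overline{Y})\}$ from the antisymmetrized vertical derivative of $H$ and then recovers $R^{\circ}(\overline{X},\overline{Y})\overline{Z}$ as a further vertical derivative of $\widehat{R^{\circ}}$, which are exactly the two identities the paper imports from \cite[Theorem 4.6]{r96}, finishing with Lemma \ref{lem.1}(c) and Lemma \ref{lem.2}(e) just as the paper does. The only detail to align is the sign/argument order in your identity $3\,\widehat{R^{\circ}}(\overline{X},\overline{Y})=(D^{\circ}_{\gamma\overline{Y}}H)(\overline{X})-(D^{\circ}_{\gamma\overline{X}}H)(\overline{Y})$, which is the opposite of the paper's $\mathfrak{A}_{\overline{X},\overline{Y}}\{(D^{\circ}_{\gamma\overline{X}}H)(\overline{Y})\}$; with the paper's convention the computation lands precisely on the stated formula.
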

\begin{proof}
Let $(M,L)$ be a Finsler
manifold of non zero scaler curvature $r$. Then, by Definition \ref{sca.},  \cite[Theorem 4.6]{r96} and  Lemma \ref{lem.1},   we obtain
\begin{eqnarray}
   \widehat{ R^{\circ}}({\overline{X},\overline{Y}})&=&\frac{1}{3}\mathfrak{A}_{\overline{X},\overline{Y}}\set{(\stackrel{v}{D^{\circ}}H)(\overline{X},\overline{Y})} \nonumber \\
   &=& \mathfrak{A}_{\overline{X},\overline{Y}}\set{B(\overline{X}) \phi(\overline{Y})} \label{R hat}.
\end{eqnarray}
where $B$ is the tensor field given by  (\ref{eq.2}).
\par
On the other hand, again by  \cite[Theorem 4.6]{r96},  we have
\begin{equation*}
  R^{\circ}({\overline{X},\overline{Y}})\overline{Z}=(\stackrel{v}{D^{\circ}}\widehat{R^{\circ}})(\overline{Z},\overline{X},\overline{Y}).
\end{equation*}
From which, together with  (\ref{eq.1}) and Lemmas
\ref{lem.1} and \ref{lem.2}, after some computation the result follows.
\end{proof}

\begin{prop} \label{prop.3a}
For a $C$-reducible Finsler space there exists a scalar $\alpha(x,y)$ such
that
\begin{equation}\label{eq.8a}
L\,({\nabla}_{\gamma \overline{X}} C)(\overline{W})+\ell(\overline{X})C(\overline{W})+\ell(\overline{W})C(\overline{X})
=\alpha(x,y) \, \hbar(\overline{X}, \overline{W}).
\end{equation}
\end{prop}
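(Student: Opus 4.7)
The plan is to vertically differentiate the $C$-reducibility relation of Definition~\ref{def.2} and then exploit the total symmetry of $\stackrel{v}{\nabla}\mathbf{T}$ guaranteed by Lemma~\ref{lem.1}(a) to extract (\ref{eq.8a}) by a single metric trace.

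First I would apply $\stackrel{v}{\nabla}_{\overline{W}}=\nabla_{\gamma\overline{W}}$ to
$$(n+1)\mathbf{T}(\overline{X},\overline{Y},\overline{Z})=\hbar(\overline{X},\overline{Y})C(\overline{Z})+\hbar(\overline{Y},\overline{Z})C(\overline{X})+\hbar(\overline{Z},\overline{X})C(\overline{Y})$$
and substitute Lemma~\ref{lem.1}(e) for $\stackrel{v}{\nabla}\hbar$; this produces a closed expansion of $(n+1)(\stackrel{v}{\nabla}_{\overline{W}}\mathbf{T})(\overline{X},\overline{Y},\overline{Z})$ in terms of $\hbar$, $\ell$, $C$, and $\stackrel{v}{\nabla}C$.

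By Lemma~\ref{lem.1}(a) the tensor $\stackrel{v}{\nabla}\mathbf{T}$ is totally symmetric in its four arguments; I will use the particular permutation
$(\stackrel{v}{\nabla}_{\overline{W}}\mathbf{T})(\overline{X},\overline{Y},\overline{Z})=(\stackrel{v}{\nabla}_{\overline{Y}}\mathbf{T})(\overline{X},\overline{W},\overline{Z})$
and trace this equality with $g^{-1}$ over the pair $(\overline{Y},\overline{Z})$. Metric compatibility of the Cartan connection together with $C=\mathrm{tr}_{g}\mathbf{T}$ collapses the left-hand side to $(n+1)(\stackrel{v}{\nabla}_{\overline{W}}C)(\overline{X})$. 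The right-hand trace is evaluated using $C(\overline{\eta})=0$, $\hbar(\overline{\eta},\cdot)=0$, $\ell(\overline{\eta})=L$, $g^{ij}\hbar_{ij}=n-1$, the auxiliary identity $(\stackrel{v}{\nabla}_{\overline{W}}C)(\overline{\eta})=-C(\overline{W})$ (got by differentiating $C(\overline{\eta})=0$ and using $\nabla_{\gamma\overline{W}}\overline{\eta}=\overline{W}$), and the homogeneity $\nabla_{\gamma\overline{\eta}}C=-C$ (itself a consequence of the total symmetry of $\stackrel{v}{\nabla}\mathbf{T}$ applied with one slot equal to $\overline{\eta}$, since $\mathbf{T}(\overline{\eta},\cdot,\cdot)=0$).

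Setting $w(\overline{X},\overline{W}):=(\stackrel{v}{\nabla}_{\overline{X}}C)(\overline{W})$, this trace delivers
$$n\,w(\overline{W},\overline{X})-w(\overline{X},\overline{W})=-(n-1)L^{-1}\bigl[\ell(\overline{W})C(\overline{X})+\ell(\overline{X})C(\overline{W})\bigr]+\mathrm{tr}_{g}(\stackrel{v}{\nabla}C)\,\hbar(\overline{W},\overline{X}).$$
Swapping $\overline{W}\leftrightarrow\overline{X}$ and subtracting shows that $w$ is symmetric in its two entries; adding then yields
$$(n-1)\bigl[L\,w(\overline{X},\overline{W})+\ell(\overline{X})C(\overline{W})+\ell(\overline{W})C(\overline{X})\bigr]=L\,\mathrm{tr}_{g}(\stackrel{v}{\nabla}C)\,\hbar(\overline{X},\overline{W}),$$
which is exactly (\ref{eq.8a}) with $\alpha(x,y):=L\,\mathrm{tr}_{g}(\stackrel{v}{\nabla}C)/(n-1)$.

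The main obstacle is identifying the right trace: the most obvious contractions of the raw $\stackrel{v}{\nabla}\mathbf{T}$ identity (tracing $(\overline{Y},\overline{Z})$ before the index-swap, or inserting $\overline{\eta}$ in any one slot) return only tautologies, because the $C$-reducibility expansion is automatically compatible with those operations. The key move is therefore to first use the total symmetry to shift the differentiation slot into a tensor slot, and only then contract with $g^{-1}$.
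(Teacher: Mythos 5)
Your proposal is correct and takes essentially the same route as the paper: both differentiate the $C$-reducibility relation vertically, use the total symmetry of $\stackrel{v}{\nabla}\mathbf{T}$ from Lemma \ref{lem.1}(a) to exchange the differentiation slot with a tensor slot, expand $\stackrel{v}{\nabla}\hbar$ via Lemma \ref{lem.1}(e), and finish with a single metric contraction (the paper records the symmetry of $\stackrel{v}{\nabla}C$ first and then contracts the antisymmetrized identity in its tensor $\mathbb{A}$, while you recover that symmetry as a byproduct of antisymmetrizing the traced identity). Your $\alpha=L\,\mathrm{tr}_{g}(\stackrel{v}{\nabla}C)/(n-1)$ coincides with the paper's $\alpha=f(x,y)L/(n-1)$.
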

\begin{proof} From Lemma \ref{lem.1}(a), we have
\begin{equation}\label{1a}
 ({\nabla}_{\gamma \overline{X}} \, \mathbf{T})(\overline{Y},\overline{Z},\overline{W})=({\nabla}_{\gamma \overline{Y}} \, \mathbf{T})(\overline{X},\overline{Z},\overline{W})
\end{equation}
Contracting $\overline{Z}$ with $\overline{W}$, the above relation reduces to
\begin{equation}\label{2a}
 ({\nabla}_{\gamma \overline{X}} \, C)(\overline{Y})=({\nabla}_{\gamma \overline{Y}} \, C)(\overline{X})
\end{equation}
Again from  (\ref{1a}), taking into account the $C$-reducibility property, we obtain
\begin{eqnarray*}
   ({\nabla}_{\gamma \overline{X}} \, \hbar)(\overline{Y},\overline{Z})\, C(\overline{W})+\hbar(\overline{Y},\overline{Z})\,
   ({\nabla}_{\gamma \overline{X}} \, C)(\overline{W}) &&  \\
 +({\nabla}_{\gamma \overline{X}} \, \hbar)(\overline{Z},\overline{W})\, C(\overline{Y})+\hbar(\overline{Z},\overline{W})\,
   ({\nabla}_{\gamma \overline{X}} \, C)(\overline{Y}) &&  \\
   +({\nabla}_{\gamma \overline{X}} \, \hbar)(\overline{W},\overline{Y})\, C(\overline{Z})+\hbar(\overline{W},\overline{Y})\,
   ({\nabla}_{\gamma \overline{X}} \, C)(\overline{Z}) && \\
    -({\nabla}_{\gamma \overline{Y}} \, \hbar)(\overline{X},\overline{Z})\, C(\overline{W})-\hbar(\overline{X},\overline{Z})\,
   ({\nabla}_{\gamma \overline{Y}} \, C)(\overline{W}) &&  \\
 -({\nabla}_{\gamma \overline{Y}} \, \hbar)(\overline{Z},\overline{W})\, C(\overline{X})-\hbar(\overline{Z},\overline{W})\,
   ({\nabla}_{\gamma \overline{Y}} \, C)(\overline{X}) &&  \\
   -({\nabla}_{\gamma \overline{Y}} \, \hbar)(\overline{W},\overline{X})\, C(\overline{Z})-\hbar(\overline{W},\overline{X})\,
   ({\nabla}_{\gamma \overline{Y}} \, C)(\overline{Z}) &=&0
\end{eqnarray*}
 Applying  Lemma \ref{lem.1}(e) and  (\ref{2a}),  we obtain that
 \begin{equation}\label{3a}
   \hbar(\overline{Y},\overline{Z})\mathbb{A}(\overline{X},\overline{W})
   +\hbar(\overline{Y},\overline{W})\mathbb{A}(\overline{X},\overline{Z})
   -\hbar(\overline{X},\overline{Z})\mathbb{A}(\overline{Y},\overline{W})
   -\hbar(\overline{X},\overline{W})\mathbb{A}(\overline{Y},\overline{Z})=0,
 \end{equation}
 where $ \mathbb{A}$ is a $\pi$-tensor field of type $(0,2)$ defined by
 \begin{equation}\label{4a}
  \mathbb{A}(\overline{X},\overline{W}):=({\nabla}_{\gamma \overline{X}} \, C)(\overline{W})+L^{-1}
  \{\ell(\overline{X})\,C(\overline{W})+\ell(\overline{W})\,C(\overline{X})\}.
 \end{equation}
 Contracting $\overline{Y}$ with $\overline{W}$ into  (\ref{3a}), we get
 \begin{equation*}\label{5a}
\mathbb{A}(\overline{X},\overline{Z})+(n-1)\mathbb{A}(\overline{X},\overline{Z})-f(x,y)
 \,\hbar(\overline{X},\overline{Z})-\mathbb{A}(\overline{X},\overline{Z})=0,
 \end{equation*}
where  $f(x,y)$ is the contracting $\overline{Y}$ with $\overline{W}$ for the $\pi$-tensor field $\mathbb{A}(\overline{Y},\overline{W})$.
From which together with the expression of $\mathbb{A}$ (\ref{4a}), the result follows where $\alpha(x,y)
:=\frac{f(x,y) L}{(n-1)}$.
\end{proof}

\section{Landsberg $C$-reducible manifolds}

It is obvious that every Berwald manifold is Landsberg, but the converse is not true.
However, the following two results generalize the results of Matsumoto \cite{r30}: \vspace{-0.1cm}
\begin{thm}\label{prop.2}  A $C$-reducible  Landsberg
 manifold $(M,L)$, with dimension $n\geq3$, is  Berwaldian  or Riemaniann.
\end{thm}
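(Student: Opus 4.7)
The plan is to differentiate the $C$-reducibility identity of Definition~\ref{def.2} horizontally with the Cartan connection, invoke the Landsberg vanishings to pin down the algebraic form of $\nabla_{\beta}C$, and then couple with Proposition~\ref{prop.3a} to force the Berwald/Riemannian dichotomy.

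First, I apply $\nabla_{\beta\overline{V}}$ to the $C$-reducibility formula. Since the Cartan horizontal derivative kills $g$, $L$, $\ell$ and hence $\hbar$, the identity becomes
$$(n+1)(\nabla_{\beta\overline{V}}\mathbf{T})(\overline{X},\overline{Y},\overline{Z})=\hbar(\overline{X},\overline{Y})F(\overline{V},\overline{Z})+\hbar(\overline{Y},\overline{Z})F(\overline{V},\overline{X})+\hbar(\overline{Z},\overline{X})F(\overline{V},\overline{Y}),$$
where $F(\overline{V},\overline{X}):=(\nabla_{\beta\overline{V}}C)(\overline{X})$. The Landsberg conditions $P=0$ and $\widehat{P}=0$ force, via the standard identity expressing the $hv$-curvature of Cartan in terms of $\stackrel{h}{\nabla}\mathbf{T}$, the symmetry of $(\nabla_{\beta\overline{V}}\mathbf{T})(\overline{X},\overline{Y},\overline{Z})$ under the swap $\overline{V}\leftrightarrow\overline{X}$. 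Combined with the total symmetry of $\mathbf{T}$ in $(\overline{X},\overline{Y},\overline{Z})$, this upgrades to total symmetry in all four arguments. Writing the swapped version, subtracting, and tracing with $g^{-1}$ in two suitable entries, and using $F(\overline{\eta},\cdot)=F(\cdot,\overline{\eta})=0$ (a consequence of the mean-Landsberg vanishing $\nabla_{\beta\overline{\eta}}C=0$ together with $C(\overline{\eta})=0$), I extract the clean algebraic form
$$F(\overline{V},\overline{X})=\lambda(x,y)\,\hbar(\overline{V},\overline{X})$$
for some scalar $\lambda$ on $\T M$.

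Next, I differentiate the identity of Proposition~\ref{prop.3a} along $\beta\overline{V}$, substitute $\nabla_{\beta}C=\lambda\hbar$, and commute $\nabla_{\beta\overline{V}}$ with $\nabla_{\gamma\overline{X}}$ via the Ricci identity for the Cartan connection. Under Landsberg only the bracket contribution $\nabla_{[\beta\overline{V},\gamma\overline{X}]}C$ survives, since the curvature term $P(\overline{V},\overline{X})$ and the torsion term $\widehat{P}(\overline{V},\overline{X})$ both vanish. Proposition~\ref{prop.1} resolves this bracket into a horizontal piece (handled by $\nabla_{\beta}C=\lambda\hbar$), a vertical piece (handled by Proposition~\ref{prop.3a} itself) and a Cartan-torsion correction $T(\overline{X},\overline{V})$; Lemma~\ref{lem.1}(e) rewrites the $\nabla_{\gamma}\hbar$ that appears. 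Collecting terms yields a tensor identity that decomposes along the linearly independent structures $\hbar\otimes\hbar$, $\ell\otimes\hbar\otimes C$ and $(\ell\otimes\ell)\otimes C$ (the independence requires $n\ge 3$). Matching coefficients leaves exactly two possibilities: either $\lambda\equiv 0$, in which case $\nabla_{\beta\overline{V}}\mathbf{T}=0$ for every $\overline{V}$ by $C$-reducibility and $(M,L)$ is Berwald by Definition~\ref{def.1}(a); or the relation collapses to $C\equiv 0$, whence $\mathbf{T}\equiv 0$ and $(M,L)$ is Riemannian.

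The main obstacle is this last commutator-and-comparison step. Correctly computing $[\beta\overline{V},\gamma\overline{X}]$ in the pullback formalism via Proposition~\ref{prop.1}, threading the Landsberg vanishings at each stage, and then establishing the tensor-type independence that truly forces the dichotomy (rather than just some looser relation between $\lambda$, $\alpha$ and $C$) is where the heart of the argument lies. This is exactly the step in which $C$-reducibility closes the Landsberg/Berwald gap, and it is where the dimensional assumption $n\ge 3$ becomes essential.
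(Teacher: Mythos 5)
Your first half is exactly the paper's: from $P=\widehat{P}=0$ you get the total symmetry of $\stackrel{h}{\nabla}\mathbf{T}$ in all four slots, and contracting this against the $C$-reducibility ansatz (using $\stackrel{h}{\nabla}\hbar=0$ and $(\nabla_{\beta \overline{W}}C)(\overline{\eta})=0$) yields $(\nabla_{\beta\overline{V}}C)(\overline{X})=\lambda(x,y)\,\hbar(\overline{V},\overline{X})$, which is precisely the paper's equation (\ref{q1}). Where you genuinely diverge is in how $\lambda C=0$ is forced. The paper invokes the Bianchi identity expressing $\nabla_{\beta}S$ through $\nabla_{\gamma}P$, $\widehat{P}$ and $P\circ T$; Landsberg gives $\nabla_{\beta}S=0$, and since $S$ is quadratic in $T$ this produces a quadratic identity in $\mathbf{T}$ and $\nabla_{\beta}\mathbf{T}$ which, after inserting $C$-reducibility and (\ref{q1}) and contracting twice, collapses to $(n-2)\mu\,C=0$. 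You instead differentiate the identity of Proposition \ref{prop.3a} along $\beta\overline{V}$ and commute $\nabla_{\beta}$ with $\nabla_{\gamma}$: with $P=\widehat{P}=0$ the only surviving correction is the torsion term $(\nabla_{\beta T(\overline{X},\overline{V})}C)(\overline{W})=\lambda\,\mathbf{T}(\overline{X},\overline{V},\overline{W})$, and Lemma \ref{lem.1}(e) plus $C$-reducibility reduce everything to an identity of the form
$$a(\overline{X})\,\hbar(\overline{V},\overline{W})+b(\overline{V})\,\hbar(\overline{X},\overline{W})+\tfrac{L\lambda}{n+1}\,C(\overline{W})\,\hbar(\overline{X},\overline{V})=0,$$
whose three traces form a linear system that is nonsingular exactly when $n\ge 3$ and hence forces $\lambda\,C=0$. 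I verified that this computation closes, so your route is valid; it is arguably more elementary in that it bypasses the $v$-curvature $S$ entirely, at the cost of requiring Proposition \ref{prop.3a} (which the paper proves but only deploys later, in Theorem \ref{thm.3}) and a careful bracket computation via Proposition \ref{prop.1}. Two small cautions: the ``independent structures'' you name ($\ell\otimes\hbar\otimes C$ and $\ell\otimes\ell\otimes C$) are not the ones that actually survive --- the $\ell\otimes\hbar$ contributions cancel in pairs and the decisive term is $C(\overline{W})\hbar(\overline{X},\overline{V})$; and, just as in the paper, the conclusion $\lambda C=0$ is pointwise, so the stated global dichotomy tacitly assumes one alternative holds on all of $\T M$.
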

 \begin{proof} Let $(M,L)$ be a $C$-reducible  Landsberg
 manifold. Hence, from Definition \ref{def.1}, we conclude that the Cartan $hv$-curvature  $P$ and  $\widehat{P}$ vanish identically. Consequently, using  \cite[Theorem (3.5)(c)]{r96} taking into account the fact that $\nabla g=0$, we have
 \begin{equation}\label{11a}
 ( \nabla_{\beta \overline{Z}}\mathbf{T})(\overline{X}, \overline{Y},\overline{W})
  =( \nabla_{\beta \overline{W}}\mathbf{T})(\overline{X},\overline{Y},\overline{Z}).
 \end{equation}
Contracting $\overline{X}$ with $\overline{Y}$ implies
\begin{equation}\label{12a}
 ( \nabla_{\beta \overline{Z}}\,C)(\overline{W})
  =( \nabla_{\beta \overline{W}}\,C)(\overline{Z}).
 \end{equation}
Hence, for $C$-reducible manifold together with (\ref{11a}) and the property $\stackrel{h}\nabla \hbar=0$, we obtain
\begin{eqnarray*}
  &&\hbar(\overline{Y}
  ,\overline{Z})( \nabla_{\beta \overline{W}}\,C)(\overline{X})+\hbar(\overline{Z},\overline{X})
  ( \nabla_{\beta \overline{W}}\,C)(\overline{Y})\\
  & =& \hbar(\overline{Y}  ,\overline{W})( \nabla_{\beta \overline{Z}}\,C)(\overline{X})+\hbar(\overline{W},\overline{X})
  ( \nabla_{\beta \overline{Z}}\,C)(\overline{Y}).
\end{eqnarray*}
   Contracting $\overline{X}$ with $\overline{Z}$ for both sides and using (\ref{12a}), one can show that
   \begin{eqnarray*}
    & & \sigma(x,y)\,\hbar(\overline{Y}  ,\overline{W})+  ( \nabla_{\beta \overline{W}}\,C)(\overline{Y})-L^{-1}\,\ell(\overline{W})
    ( \nabla_{\beta \overline{\eta}}\,C)(\overline{Y})\\
  &=& ( \nabla_{\beta \overline{W}}\,C)(\overline{Y})-L^{-1}\,\ell(\overline{Y})
    ( \nabla_{\beta \overline{W}}\,C)(\overline{\eta})+ (n-1)( \nabla_{\beta \overline{W}}\,C)(\overline{Y}),
\end{eqnarray*}
where $\sigma(x,y)$ is the contracting $\overline{X}$ with $\overline{Z}$ for the term $( \nabla_{\beta \overline{Z}}\,C)(\overline{X})$.  From which taking into account the fact that $( \nabla_{\beta \overline{W}}\,C)(\overline{\eta})$ vanishes identically, we get
\begin{eqnarray}\label{q1}
   (\nabla_{\beta \overline{W}}\,C)(\overline{Y})&=&\mu(x,y)\,\hbar(\overline{Y}  ,\overline{W})
   \Longleftrightarrow 
   \nabla_{\beta \overline{W}}\,\overline{C} = \mu(x,y)\,\phi(\overline{W}),  
\end{eqnarray}
with $\mu(x,y):=\frac{\sigma(x,y)}{(n-1)}$ and $C(\overline{X})=:g(\overline{C},\overline{X})$.

In general, for Cartan connection  \cite[Theorem 3.4]{r96}, we have
\begin{eqnarray*}
 (\nabla_{\beta\overline{Z}}S)(\overline{X},\overline{Y},\overline{W}
)&=&(\nabla_{\gamma\overline{X}}P)(\overline{Z},\overline{Y},\overline{W})-
(\nabla_{\gamma \overline{Y}}P)(\overline{Z},\overline{X},
\overline{W})-S(\widehat{P}(\overline{Z},\overline{Y}),\overline{X})\overline{W} \\
&+&S(\widehat{P}(\overline{Z},\overline{X}),\overline{Y})\overline{W}
-P(T(\overline{Y},\overline{Z}),\overline{X})\overline{W}+ P(T(\overline{X},\overline{Z}),\overline{Y})\overline{W}.
\end{eqnarray*}
In case of Landsberg manifold, due to Definition \ref{def.1}(b), we conclude that
\begin{equation*}
  (\nabla_{\beta\overline{Z}}S)(\overline{X},\overline{Y},\overline{W})=0
\end{equation*}
 From which taking into account  \cite[Theorem 3.4]{r96}, we get
 \begin{eqnarray*}
  &&g((\nabla_{\beta \overline{N}}T)(\overline{X},\overline{W}) , T(\overline{Y}, \overline{Z}))+
  g(T(\overline{X},\overline{W}) , (\nabla_{\beta \overline{N}}T)(\overline{Y}, \overline{Z}))\\
  && -g((\nabla_{\beta \overline{N}}T)(\overline{Y}, \overline{W}) ,T( \overline{X}, \overline{Z}))-g(T(\overline{Y}, \overline{W}) ,(\nabla_{\beta \overline{N}}T)( \overline{X}, \overline{Z}))=0.
\end{eqnarray*}
 Hence,  for a $C$-reducible manifold taking into account   (\ref{q1}), one can show that
 \begin{eqnarray*}
   &&(n+1)^{-1}\mu(x,y)\{
    \hbar(\overline{X},\overline{W})\mathbf{T}(\overline{Y},\overline{Z},\overline{N})
   +\hbar(\overline{X},\overline{N})\mathbf{T}(\overline{Y},\overline{Z},\overline{W})
   +\hbar(\overline{W},\overline{N})\mathbf{T}(\overline{Y},\overline{Z},\overline{X}) \\
    &+&\hbar(\overline{Y},\overline{Z})\mathbf{T}(\overline{X},\overline{W},\overline{N})
   +\hbar(\overline{Y},\overline{N})\mathbf{T}(\overline{X},\overline{W},\overline{Z})
   +\hbar(\overline{Z},\overline{N})\mathbf{T}(\overline{X},\overline{W},\overline{Y})\\
    &-&\hbar(\overline{Y},\overline{W})\mathbf{T}(\overline{X},\overline{Z},\overline{N})
   -\hbar(\overline{Y},\overline{N})\mathbf{T}(\overline{X},\overline{Z},\overline{W})
   -\hbar(\overline{W},\overline{N})\mathbf{T}(\overline{X},\overline{Z},\overline{Y}) \\
    &-&\hbar(\overline{X},\overline{Z})\mathbf{T}(\overline{Y},\overline{W},\overline{N})
   -\hbar(\overline{X},\overline{N})\mathbf{T}(\overline{Y},\overline{W},\overline{Z})
   -\hbar(\overline{Z},\overline{N})\mathbf{T}(\overline{Y},\overline{W},\overline{X})\}=0.
 \end{eqnarray*}
Contracting $\overline{X}$ with $\overline{W}$, the above equation reduces to
\begin{equation*}
\mu(x,y)\,\{(n-3)\mathbf{T}(\overline{Y},\overline{Z},\overline{N})+\hbar(\overline{Y},\overline{Z})\,C(\overline{N})\}=0.
\end{equation*}
Again contracting $\overline{Y}$ with $\overline{Z}$, we obtain
\begin{equation}\label{nn}
(n-2)\mu(x,y)\,C(\overline{N})=0. 
\end{equation}
Therefore, provided that $n\geq3$, we have two cases:\\
{\bf Case 1:} if  $\mu(x,y)\neq 0$, then the contracted torsion $C$ vanishes. Hence, the Cartan torsion $T=0$ by reducibility property. Consequently, $(M,L)$ is Riemannian. \\
{\bf Case 2:} if  $\mu(x,y)= 0$, then by Equation (\ref{q1}) the horizontal covariant derivatives for the contracted torsion $C$ vanishes identically. Hence, the horizontal covariant derivative  $(\nabla_{\beta \overline{W}}\, \mathbf{T})=0$ by reducibility property, which means that $(M,L)$ is Berwald. This completes the proof.
 \end{proof}
  
\begin{prop} \label{prop.3} If $(M,L)$ is a Landsberg manifold of non zero scaler curvature $r$, then it is a $C$-reducible manifold.
\end{prop}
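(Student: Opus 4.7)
The plan is to extract the $C$-reducibility of $\mathbf T$ from curvature data, using the Landsberg condition to identify the Cartan and Berwald $h$-curvatures and then invoking the Cartan-type symmetries that are not enjoyed by the Berwald connection. First, by Definition \ref{def.1}(b) the Landsberg condition gives $P=0$ and $\widehat P=0$. Proposition \ref{prop.1}(b) then yields $\nabla_{\beta \overline{X}}\overline{Y}=D^{\circ}_{\beta \overline{X}}\overline{Y}$, so the Cartan and Berwald horizontal distributions coincide and, in particular, the two $h$-curvatures agree: $R(\overline{X},\overline{Y})\overline{Z}=R^{\circ}(\overline{X},\overline{Y})\overline{Z}$. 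Lemma \ref{lem.3} then furnishes an explicit formula for this common $h$-curvature in terms of $r$, $A$, $B$, $\hbar$, $\ell$, $\phi$, and notably contains no $\mathbf T$.

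Second, to bring $\mathbf T$ into play I would take the vertical covariant derivative (in the Cartan connection) of the $(v)h$-torsion identity $\widehat R(\overline{X},\overline{Y})=\mathfrak{A}_{\overline{X},\overline{Y}}\{B(\overline{X})\phi(\overline{Y})\}$ recorded in~(\ref{R hat}). Computing the right-hand side using Lemma \ref{lem.1}(c) for $\stackrel{v}{D^{\circ}}\phi$ and Lemma \ref{lem.2}(e) for $D^{\circ}_{\gamma \overline{Z}}B$, and then passing from Berwald to Cartan vertical derivative via Proposition \ref{prop.1}(a), injects the Cartan tensor $\mathbf T$ through terms of the shape $T(\overline{Z},\cdot)$ acting on $\phi$ and $\overline{\eta}$. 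On the left-hand side, invoke the Ricci-type identity $(\nabla_{\gamma \overline{Z}}\widehat R)(\overline{X},\overline{Y})=R(\overline{X},\overline{Y})\overline{Z}$, which holds in the Landsberg setting thanks to $\widehat P=0$. Equating this $\mathbf T$-carrying expression with the $\mathbf T$-free expression of Lemma \ref{lem.3}, and then contracting with the metric over one of the free slots, should convert $\mathbf T$ into the contracted torsion $C$ on one side; since $r\neq 0$ one can solve for $\mathbf T$ and read off the $C$-reducible decomposition of Definition \ref{def.2} with its $\frac{1}{n+1}$ coefficient.

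The main obstacle will be the tensorial bookkeeping: reconciling the asymmetrizer $\mathfrak{A}_{\overline{X},\overline{Y}}$ coming from Lemma \ref{lem.3} with the total symmetry of $\mathbf T$ stated in Lemma \ref{lem.1}(a), handling the several auxiliary vertical derivatives of $A$, $B$ and $\phi$ cleanly through Lemma \ref{lem.2}(e) and the $h^{+}(0)$-homogeneity of $r$, and identifying the precise trace that produces the dimension-dependent factor $\frac{1}{n+1}$. The non-vanishing of $r$ is used critically at the final step to invert the resulting linear relation for $\mathbf T$; without it the identity degenerates and no information about the Cartan tensor can be extracted, so this hypothesis is not merely technical but genuinely structural.
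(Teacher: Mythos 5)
Your strategy diverges from the paper's and, as it stands, cannot reach the conclusion. The paper's proof of Proposition \ref{prop.3} hinges on two ingredients that are absent from your outline: the Bianchi identity $(D^{\circ}_{\gamma \overline{X}}R^{\circ})(\overline{Y},\overline{Z},\overline{W})=(D^{\circ}_{\beta \overline{Z}}P^{\circ})(\overline{Y},\overline{X},\overline{W})-(D^{\circ}_{\beta \overline{Y}}P^{\circ})(\overline{Z},\overline{X},\overline{W})$ specialized at $\overline{Z}=\overline{\eta}$, and the characterization of the Landsberg condition as the vanishing of $\mathbf{P}^{\circ}(\cdot,\cdot,\cdot,\overline{\eta})$, which forces $(D^{\circ}_{\beta\overline{\eta}}\mathbf{P}^{\circ})(\cdot,\cdot,\cdot,\overline{\eta})=0$. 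The Cartan tensor enters by applying $D^{\circ}_{\gamma\overline{X}}$ a \emph{second} time to the curvature, i.e.\ to the formula of Lemma \ref{lem.3}, through $(D^{\circ}_{\gamma\overline{X}}\hbar)(\overline{Y},\overline{W})=2\mathbf{T}(\overline{X},\overline{Y},\overline{W})-\cdots$ (Lemma \ref{lem.1}(d)); that is the origin of the term $3r\,\mathbf{T}(\overline{X},\overline{Y},\overline{W})$ in (\ref{eq.4}), the only place where $r$ multiplies $\mathbf{T}$ and hence the only reason $r\neq 0$ allows one to solve for $\mathbf{T}$ as in (\ref{eq.5}).

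Your proposed mechanism for injecting $\mathbf{T}$ --- comparing the Cartan and Berwald vertical derivatives of $\widehat{R}=\widehat{R^{\circ}}$ via Proposition \ref{prop.1}(a) --- produces only the torsion-correction terms $T(\overline{Z},\widehat{R}(\overline{X},\overline{Y}))-\widehat{R}(T(\overline{Z},\overline{X}),\overline{Y})-\widehat{R}(\overline{X},T(\overline{Z},\overline{Y}))$, and these almost entirely cancel: since $i_{\overline{\eta}}\mathbf{T}=0$ one has $\ell\circ T=0$, $T(\cdot,\overline{\eta})=0$ and $\phi\circ T=T$, so with $\widehat{R}=\mathfrak{A}_{\overline{X},\overline{Y}}\{B(\overline{X})\phi(\overline{Y})\}$ from (\ref{R hat}) the whole discrepancy collapses to $\mathfrak{A}_{\overline{X},\overline{Y}}\{B(T(\overline{Z},\overline{X}))\,\phi(\overline{Y})\}$ with $B(T(\overline{Z},\overline{X}))=\tfrac{1}{3}L^{2}D^{\circ}_{\gamma T(\overline{Z},\overline{X})}r$. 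At best this yields the weak scalar constraint $D^{\circ}_{\gamma T(\overline{Z},\overline{X})}r=0$; it is not a relation of the form $r\,\mathbf{T}=(\hbar\otimes\stackrel{v}{D^{\circ}}r)$-terms and cannot be inverted to produce Definition \ref{def.2}, so the hypothesis $r\neq 0$ never gets to play the role you assign it. Moreover, both identities you lean on are unjustified: $R=R^{\circ}$ fails in general for Landsberg spaces (the two $h$-curvatures differ by a term of type $T(\widehat{R}(\overline{X},\overline{Y}),\overline{Z})$ coming from the vertical part of $[\beta\overline{X},\beta\overline{Y}]$; the Landsberg condition only equalizes the horizontal covariant derivatives), and the Cartan ``Ricci identity'' $(\nabla_{\gamma\overline{Z}}\widehat{R})(\overline{X},\overline{Y})=R(\overline{X},\overline{Y})\overline{Z}$ would require $(\nabla_{\gamma\overline{Z}}R)(\overline{X},\overline{Y})\overline{\eta}=0$, which does not follow from $\widehat{P}=0$ alone; the clean identity $R^{\circ}(\overline{X},\overline{Y})\overline{Z}=(\stackrel{v}{D^{\circ}}\widehat{R^{\circ}})(\overline{Z},\overline{X},\overline{Y})$ is special to the Berwald connection and is already consumed in establishing Lemma \ref{lem.3}.
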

\begin{proof}  By \cite[ Theorem 4.6]{r96}, we have:
$$(D^{\circ}_{\gamma \overline{X}}R^{\circ})(\overline{Y},
    \overline{Z},  \overline{W})=(D^{\circ}_{\beta \overline{Z}}P^{\circ})(\overline{Y},
    \overline{X},  \overline{W})-(D^{\circ}_{\beta \overline{Y}}P^{\circ})(\overline{Z},
    \overline{X},  \overline{W}).$$
Setting $\overline{Z}=\overline{\eta}$ noting the facts that $i_{\overline{\eta}}P^{\circ}=0$ and $K \circ \beta=0$, we obtain   $$(D^{\circ}_{\beta \overline{\eta}}P^{\circ})(\overline{Y},
    \overline{X},  \overline{W})=(D^{\circ}_{\gamma \overline{X}}R^{\circ})(\overline{Y},
    \overline{\eta},  \overline{W}).$$

 Since $(M,L)$ is a Finsler manifold of non zero scaler curvature $r$, then from the above relation and Lemma \ref{lem.3}, we get
 \begin{eqnarray*}
   (D^{\circ}_{\beta \overline{\eta}}P^{\circ})(\overline{Y},
    \overline{X},  \overline{W}) &=& -2 L^{-3}\ell(\overline{X}) \hbar(\overline{Y}, \overline{W}) B(\overline{\eta}) \, \overline{\eta}
    -L^{-2}(D^{\circ}_{\gamma \overline{X}}\hbar)(\overline{\eta}, \overline{W})B(\overline{Y}) \, \overline{\eta}
    \\
    && +L^{-2}(D^{\circ}_{\gamma \overline{X}}\hbar)(\overline{Y}, \overline{W})B(\overline{\eta}) \, \overline{\eta}
    +L^{-2} \hbar(\overline{Y}, \overline{W})(D^{\circ}_{\gamma \overline{X}}B)(\overline{\eta}) \, \overline{\eta}\\
    &&+L^{-2} \hbar(\overline{Y}, \overline{W})B(\overline{\eta}) \, D^{\circ}_{\gamma \overline{X}}\overline{\eta}
    - r \,(D^{\circ}_{\gamma \overline{X}}\hbar)(\overline{\eta}, \overline{W}) \phi(\overline{Y})    \\
    &&  + r \, \hbar(\overline{Y}, \overline{W})(D^{\circ}_{\gamma \overline{X}}\phi)(\overline{\eta})
    + L^{-2} \ell(\overline{X}) \ell(\overline{\eta}) B(\overline{Y})\phi(\overline{W})\\
    && - L^{-2} \ell(\overline{X}) \ell(\overline{Y}) B(\overline{\eta})\phi(\overline{W})
    - L^{-1}  \ell(\overline{\eta}) B(\overline{Y})(D^{\circ}_{\gamma \overline{X}}\phi)(\overline{W})\\
    &&+ L^{-1}  \ell(\overline{Y}) B(\overline{\eta})(D^{\circ}_{\gamma \overline{X}}\phi)(\overline{W})
    - L^{-1}  (D^{\circ}_{\gamma \overline{X}}\ell)(\overline{\eta}) B(\overline{Y})\phi(\overline{W})\\
    && + L^{-1}  (D^{\circ}_{\gamma \overline{X}}\ell)(\overline{Y}) B(\overline{\eta})\phi(\overline{W})
    -L^{-1}  \ell(\overline{\eta}) (D^{\circ}_{\gamma \overline{X}}B)(\overline{Y})\phi(\overline{W})\\
    && + L^{-1}  \ell(\overline{Y}) (D^{\circ}_{\gamma \overline{X}}B)(\overline{\eta})\phi(\overline{W})
    + A(\overline{Y}, \overline{W}) (D^{\circ}_{\gamma \overline{X}}\phi)(\overline{\eta}) \\
    && -  A(\overline{\eta}, \overline{W}) (D^{\circ}_{\gamma \overline{X}}\phi)(\overline{Y})
     + (D^{\circ}_{\gamma \overline{X}}A)(\overline{Y}, \overline{W})\phi(\overline{\eta})
     - (D^{\circ}_{\gamma \overline{X}}A)(\overline{\eta}, \overline{W}) \phi(\overline{Y}) .
    \end{eqnarray*}
 Thus, using the facts that $(D^{\circ}_{\beta \overline{\eta}}\textbf{P}^{\circ})(\overline{Y},
    \overline{X},  \overline{W}, \overline{Z})=g((D^{\circ}_{\beta \overline{\eta}}P^{\circ})(\overline{Y},
    \overline{X},  \overline{W}), \overline{Z})$, $i_{\overline{\eta}}\phi=0=i_{\overline{\eta}}\hbar$, together with Lemmas \ref{lem.1}
    , \ref{lem.2} and \ref{lem.3},  after long calculations, we have
       \begin{eqnarray}
      (D^{\circ}_{\beta \overline{\eta}}\textbf{P}^{\circ})(\overline{Y},
    \overline{X},  \overline{W}, \overline{Z}) &=& \frac{2}{3} L \ell(\overline{Z})
    [\hbar(\overline{X},\overline{W}) D^{\circ}_{\gamma \overline{Y}}\, r
     + \hbar(\overline{Y},\overline{W}) D^{\circ}_{\gamma \overline{X}}\, r  \nonumber     \\
    && +\hbar(\overline{X},\overline{Y}) D^{\circ}_{\gamma \overline{W}}\, r
     +3r \,\textbf{T}(\overline{X}, \overline{Y},\overline{W}) ]
      -\frac{1}{3}[\hbar(\overline{Y},\overline{Z}) \mathbf{M}(\overline{X},\overline{W}) \nonumber \\
      &&  +\hbar(\overline{X},\overline{Z}) \mathbf{M}(\overline{Y},\overline{W})
     +\hbar(\overline{W},\overline{Z}) \mathbf{M}(\overline{X},\overline{Y})] \label{eq.1a} ,
                \end{eqnarray}
 where
 \begin{equation}\label{eq.M}
   \mathbf{M}(\overline{X},\overline{Y}):=L \, \ell(\overline{X})D^{\circ}_{\gamma \overline{Y}}\, r
   +  L\,\ell(\overline{Y})D^{\circ}_{\gamma \overline{X}}\, r + L^2 D^{\circ}_{\gamma \overline{X}}D^{\circ}_{\gamma \overline{Y}}\, r.
 \end{equation}
 Putting $\overline{Z}=\overline{\eta}$,   we get
    \begin{eqnarray}
      (D^{\circ}_{\beta \overline{\eta}}\textbf{P}^{\circ})(\overline{Y},
    \overline{X},  \overline{W}, \overline{\eta}) &=& \frac{2}{3} L^2 [\hbar(\overline{X},\overline{W}) D^{\circ}_{\gamma \overline{Y}}\, r
     + \hbar(\overline{Y},\overline{W}) D^{\circ}_{\gamma \overline{X}}\, r  \nonumber     \\
    && +\hbar(\overline{X},\overline{Y}) D^{\circ}_{\gamma \overline{W}}\, r
     +3r \,\textbf{T}(\overline{X}, \overline{Y},\overline{W}) ] \label{eq.4},
                \end{eqnarray}
 On the other hand, by \cite{r92}, we have:
 \begin{eqnarray*}
   {{P}}^{\circ}(\overline{X},\overline{Y})\overline{Z}&=& P(\overline{X},\overline{Y})\overline{Z}+
  (\nabla_{\gamma \overline{Y}}\widehat{P})(\overline{X},\overline{Z}) +
   \widehat{P}(T(\overline{Y},\overline{X}),\overline{Z})
   +\widehat{P}(\overline{X},T(\overline{Y},\overline{Z}))\\
&&+(\nabla_{\beta\overline{X}}T)(\overline{Y},\overline{Z})
-T(\overline{Y}, \widehat{P}(\overline{X},\overline{Z}))-T(
\widehat{P}(\overline{X}, \overline{Y}), \overline{Z}).
 \end{eqnarray*}
Hence, using Definition \ref{def.1} taking into account the fact that $\nabla_{\beta \overline{X}}g=0$, $(M,L)$ is landsberg if and only if 
$\textbf{P}^{\circ}(\overline{Y},\overline{X},  \overline{W}, \overline{\eta})$ vanishes identically.  Consequently, for a landsberg manifold,   (\ref{eq.4}) reduces to
    \begin{eqnarray}
      \textbf{T}(\overline{X}, \overline{Y},\overline{W}) &=& \frac{-1}{3 \, r}  [\hbar(\overline{X},\overline{W}) D^{\circ}_{\gamma \overline{Y}}\, r
     + \hbar(\overline{Y},\overline{W}) D^{\circ}_{\gamma \overline{X}}\, r
      +\hbar(\overline{X},\overline{Y}) D^{\circ}_{\gamma \overline{W}}\, r] \label{eq.5},
                \end{eqnarray}
 provided that $r\neq 0$. Contracting $\overline{Y}$ with $\overline{W}$, we obtain
 \begin{equation}\label{eq.6}
  D^{\circ}_{\gamma \overline{X}}\, r=\frac{- 3\, r}{(n+1)}  C(\overline{X}).
 \end{equation}
 From which together with (\ref{eq.5}), we conclude that $(M,L)$ is $C$-reducible.
 \end{proof}

\begin{thm} \label{thm.3} If $(M,L)$ is a Berwald manifold of non zero scaler curvature $r$ with $n\geq3$, then it is a Riemannian manifold of constant curvature.
\end{thm}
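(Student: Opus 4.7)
The strategy is to use the Berwald condition $\mathbf{P}^\circ = 0$ to strengthen what (\ref{eq.6}) already tells us in the Landsberg case, then exploit Proposition \ref{prop.3a} to force $C$ to vanish.

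Since every Berwald manifold is Landsberg and $r \neq 0$, Proposition \ref{prop.3} applies and $(M,L)$ is $C$-reducible. Because $\mathbf{P}^\circ = 0$, the left-hand side of (\ref{eq.1a}) vanishes; the first bracket on the right also vanishes by the Landsberg identity (\ref{eq.5}) derived in the proof of Proposition \ref{prop.3}. Thus (\ref{eq.1a}) collapses to
\begin{equation*}
\hbar(\overline{Y}, \overline{Z})\, \mathbf{M}(\overline{X}, \overline{W}) + \hbar(\overline{X}, \overline{Z})\, \mathbf{M}(\overline{Y}, \overline{W}) + \hbar(\overline{W}, \overline{Z})\, \mathbf{M}(\overline{X}, \overline{Y}) = 0.
\end{equation*}
Contracting $\overline{Y}$ with $\overline{Z}$ using $g^{-1}$ and $\operatorname{tr}\hbar = n-1$ gives
\begin{equation*}
(n+1)\, \mathbf{M}(\overline{X}, \overline{W}) = L^{-1}\bigl[\ell(\overline{X})\, \mathbf{M}(\overline{\eta}, \overline{W}) + \ell(\overline{W})\, \mathbf{M}(\overline{X}, \overline{\eta})\bigr].
\end{equation*}
A short computation using $D^\circ_{\gamma \overline{\eta}}\, r = 0$ (since $r$ is $h^+(0)$) together with $D^\circ_{\gamma \overline{X}}\, \overline{\eta} = \overline{X}$ shows that $\mathbf{M}(\overline{\eta}, \cdot) \equiv 0$, hence $\mathbf{M} \equiv 0$.

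Next, unwinding $\mathbf{M} = 0$ with (\ref{eq.6}) and the Leibniz rule for the vertical Berwald derivative yields a closed formula of the shape
\begin{equation*}
(D^\circ_{\gamma \overline{Y}}\, C)(\overline{X}) = \tfrac{3}{n+1}\, C(\overline{X})\, C(\overline{Y}) - L^{-1}\, \ell(\overline{X})\, C(\overline{Y}) - L^{-1}\, \ell(\overline{Y})\, C(\overline{X}).
\end{equation*}
Converting the left-hand side into the Cartan vertical derivative through Proposition \ref{prop.1}(a), expanding $C(T(\overline{X}, \overline{Y}))$ by the $C$-reducibility formula (using $C(\overline{\eta}) = 0$ to identify $\hbar(\cdot, \tilde{C}) = C(\cdot)$, where $\tilde{C}$ is the $g$-dual of $C$), and comparing with the identity supplied by Proposition \ref{prop.3a}, one obtains
\begin{equation*}
C(\overline{X})\, C(\overline{W}) = \lambda(x,y)\, \hbar(\overline{X}, \overline{W})
\end{equation*}
for some scalar $\lambda$ depending on the function $\alpha$ of Proposition \ref{prop.3a} and on $g(\tilde{C}, \tilde{C})$.

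The finale is a pointwise rank argument: the symmetric bilinear form $C \otimes C$ has rank at most one, whereas $\hbar$ has rank $n - 1 \geq 2$ (this is where $n \geq 3$ enters). Hence $\lambda \equiv 0$ and $C = 0$; by the $C$-reducibility formula, $\mathbf{T} = 0$, so $(M, L)$ is Riemannian. Constancy of $r$ then follows from Schur's classical theorem for Riemannian manifolds of pointwise isotropic sectional curvature in dimension $n \geq 3$. The main technical obstacle is the passage from $D^\circ_\gamma C$ to $\nabla_\gamma C$ together with the careful bookkeeping of the $C$-reducible contraction $C(T(\overline{X}, \overline{Y})) = \tfrac{1}{n+1}[g(\tilde C,\tilde C)\, \hbar(\overline{X}, \overline{Y}) + 2\, C(\overline{X}) C(\overline{Y})]$; everything else is routine manipulation using Lemmas \ref{lem.1}--\ref{lem.3}.
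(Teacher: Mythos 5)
Your proposal is correct and, up to the last step, follows the paper's own route essentially verbatim: deduce $C$-reducibility from Proposition \ref{prop.3}, collapse (\ref{eq.1a}) to $\mathbf{M}=0$ (your contraction is actually a bit more careful than the paper's, which silently uses $\mathbf{M}(\overline{\eta},\cdot)=0$), convert $\mathbf{M}=0$ via (\ref{eq.6}) into the paper's (\ref{eq.7}), compare with the identity coming from Proposition \ref{prop.3a} after passing from $D^{\circ}_{\gamma}C$ to $\nabla_{\gamma}C$, and kill $C$ by the rank-one-versus-rank-$(n-1)$ argument. The only genuine divergence is the constancy of $r$: you invoke the classical Riemannian Schur theorem (legitimate, since once $\mathbf{T}=0$ the metric is Riemannian and the scalar-curvature condition $H(\overline{X})=rL^{2}\phi(\overline{X})$ says precisely that the sectional curvature is pointwise isotropic), whereas the paper stays inside the Finslerian formalism and derives $D^{\circ}_{\beta\overline{X}}\,r=0$ directly from the Bianchi identity for the Berwald connection together with $D^{\circ}_{\gamma\overline{X}}\,r=0$. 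Your shortcut buys brevity at the cost of importing an external classical result; the paper's computation keeps the argument self-contained and intrinsic, which is the stated point of the article.
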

\begin{proof} Assume that $(M,L)$ is a Berwald manifold of non zero scaler curvature $r$ with $n\geq3$, then it is  $C$-reducible.
  Also, the Berwald hv-curvature $P^{\circ}$  vanishes identically  and   (\ref{eq.5}) holds good. Therefore   (\ref{eq.1a}) becomes
$$\hbar(\overline{Y},\overline{Z}) \mathbf{M}(\overline{X},\overline{W})
       +\hbar(\overline{X},\overline{Z}) \mathbf{M}(\overline{Y},\overline{W})
     +\hbar(\overline{W},\overline{Z}) \mathbf{M}(\overline{X},\overline{Y})=0$$
Contracting $\overline{Y}$ with $\overline{Z}$, we obtain
$$(n+1) \mathbf{M}(\overline{X},\overline{W})=0$$
Hence, from   (\ref{eq.M}), we conclude that
$$\ell(\overline{X})D^{\circ}_{\gamma \overline{W}}\, r
   +\ell(\overline{W})D^{\circ}_{\gamma \overline{X}}\, r + L D^{\circ}_{\gamma \overline{X}}D^{\circ}_{\gamma \overline{W}}\, r.=0$$
From which together with (\ref{eq.6}), we have
\begin{equation}\label{eq.7}
\ell(\overline{X})C(\overline{W})+\ell(\overline{W})C(\overline{X})+L [(D^{\circ}_{\gamma \overline{X}} C)(\overline{W})
-\frac{3}{(n+1)}C(\overline{X})C(\overline{W})]=0.
\end{equation}

On the other hand, for a $C$-reducible Finsler space and using Proposition \ref{prop.1}, one can show that
$$(\nabla_{\gamma \overline{X}} C)(\overline{W})=(D^{\circ}_{\gamma \overline{X}} C)(\overline{W})
-\frac{1}{(n+1)}\{C^{2} \, \hbar(\overline{X},\overline{W})+2C(\overline{X})C(\overline{W})\},$$
where $C^2:=C(\overline{C}); C(\overline{X})=:g(\overline{C},\overline{X})$.
Consequently, using Proposition \ref{prop.3a} we conclude that for a $C$-reducible Finsler manifold there exists a scalar $\psi(x,y)$ such
that
\begin{equation}\label{eq.8}
\ell(\overline{X})C(\overline{W})+\ell(\overline{W})C(\overline{X})+L [(D^{\circ}_{\gamma \overline{X}} C)(\overline{W})
-\frac{2}{(n+1)}C(\overline{X})C(\overline{W})]=\psi(x,y) \, \hbar(\overline{X}, \overline{W}),
\end{equation}
where $\psi(x,y):=\frac{L \, C^2}{(n+1)}+\alpha(x,y)$.  Now, from Eqs. (\ref{eq.7}) and (\ref{eq.8}), we get
$$\frac{L}{(n+1)}\, C(\overline{X})C(\overline{W})=\psi(x,y) \, \hbar(\overline{X}, \overline{W}),$$
As the trace of L.H.S. ($\hbar(\overline{X}, \overline{W})$) equals $n-1\geq2$ and the trace of R.H.S. ($C(\overline{X})C(\overline{W})$) equals $1$, then $\psi(x,y)$ vanishes identically. Hence, the torsion form $C=0$, and by $C$-reducibility the Cartan torsion $\textbf{T}$ vanishes. Therefore $(M,L)$ is a Riemannian Manifold. Also, from the fact that the torsion form $C$ vanishes together  (\ref{eq.6}), we conclude that
   \begin{equation}\label{eq.ver.cov.}
     D^{\circ}_{\gamma \overline{X}}\, r=0,
   \end{equation}
  which means that the scalar curvature $r$ vertically parallel. To prove that the scalar curvature $r$ is constant, we need to show that the scalar curvature $r$ is horizonally parallel as follows: \\
    By (\ref{R hat}), together with (\ref{eq.ver.cov.}), we obtain
 \begin{equation}\label{eq.9}
    \widehat{R^{\circ}}(\overline{X},\overline{Y})=r \, L \set{\ell(\overline{X})\overline{Y}-\ell(\overline{Y})\overline{X}}.
 \end{equation}
By \cite{r96}, we have   
\begin{equation*}\label{eq.10}
   \mathfrak{S}_{\overline{X},\overline{Y},\overline{Z}}\,
\{(D^{\circ}_{\beta \overline{X}}R^{\circ})(\overline{Y},
\overline{Z},\overline{W})+P^{\circ}(\widehat{R^{\circ}}(\overline{X},\overline{Y}),
\overline{Z})\overline{W}\}=0,
\end{equation*}
where $\mathfrak{S}_{\overline{X},\overline{Y},\overline{Z}}$ is the cyclic sum over $\overline{X},\overline{Y},\overline{Z}$.
 Hence, by \cite{r96},  the $(v)hv$-torsion $\widehat{P^{\circ}}$ vanishes, it follows that
\begin{equation}\label{eq.11}
   \mathfrak{S}_{\overline{X},\overline{Y},\overline{Z}}\,
(D^{\circ}_{\beta \overline{X}}\widehat{R^{\circ}})(\overline{Y},
\overline{Z})=0.
\end{equation}
In view of  (\ref{eq.9}) and (\ref{eq.11}),  Definition \ref{sca.} and the fact that $D^{\circ}_{\beta
\overline{X}}\ell=0$, we get
\begin{eqnarray*}
  && L(D^{\circ}_{\beta
\overline{X}}\,r)(\ell(\overline{Y})\overline{Z}-\ell(\overline{Z})\overline{Y})
+  L(D^{\circ}_{\beta
\overline{Y}}\,r)(\ell(\overline{Z})\overline{X}-\ell(\overline{X})\overline{Z}) \\
   &&+L(D^{\circ}_{\beta
\overline{Z}}\,r)(\ell(\overline{X})\overline{Y}-\ell(\overline{Y})\overline{X})=0.
\end{eqnarray*}
Setting $\overline{Z}=\overline{\eta}$ into the above equation,
noting that $\ell(\overline{\eta})=L$, we obtain
\begin{eqnarray*}
  && L({D^{\circ}}_{\beta
\overline{X}} \,r)(\ell(\overline{Y})\overline{\eta}-L\overline{Y})
+  L(D^{\circ}_{\beta
\overline{Y}} \,r)(L\overline{X}-\ell(\overline{X})\overline{\eta}) \\
   &&+L(D^{\circ}_{\beta
\overline{\eta}} \,r)(\ell(\overline{X})\overline{Y}-\ell(\overline{Y})\overline{X})=0.
\end{eqnarray*}
Taking the trace of both sides with respect to $\overline{Y}$, it
follows that
\begin{equation}\label{eq.12}
{D^{\circ}}_{\beta \overline{X}} \,r=L^{-1}({D^{\circ}}_{\beta
\overline{\eta}} \,r)\ell(\overline{X}).
\end{equation}
Applying the  vertical covariant derivative with respect to $\overline{Y}$
on both sides of (\ref{eq.12}), yields
\begin{equation*}
  \ell(\overline{Y})
  D^{\circ}_{\beta\overline{X}} \,r+L(\stackrel{v}{D^{\circ}}\stackrel{h}{D^{\circ}}
  \, r)(\overline{X},\overline{Y})=
  L^{-1}\hbar(\overline{X},\overline{Y})(D^{\circ}_{\beta\overline{\eta}} \,r)+\ell(\overline{X})
  (\stackrel{v}{D^{\circ}}\stackrel{h}{D^{\circ}} \, r)(\overline{\eta},\overline{Y}).
\end{equation*}
From (\ref{eq.ver.cov.}), noting that $(\stackrel{v}{D^{\circ}}\stackrel{h}{D^{\circ}}
  r )(\overline{X},\overline{Y})=(\stackrel{h}{D^{\circ}}\stackrel{v}{D^{\circ}}
   r)(\overline{Y},\overline{X})$, the above relation reduces to (provided that $n\geq3$)
\begin{equation*}
  \ell(\overline{Y})
  D^{\circ}_{\beta\overline{X}}\, r=
  L^{-1}\hbar(\overline{X},\overline{Y})(D^{\circ}_{\beta\overline{\eta}} \, r).
\end{equation*}
Setting $\overline{Y}=\overline{\eta}$ into the above equation,
noting that $\ell(\overline{\eta})=L$ and
$\hbar(.,\overline{\eta})=0$, it follows that
$D^{\circ}_{\beta\overline{X}} \, r=0$. Consequently,
\begin{equation}\label{eq.hor.cov.}
 D^{\circ}_{\beta \overline{X}}\, r=0,
\end{equation}
which means that the scalar curvature $r$ is horizonally parallel. Now, from (\ref{eq.ver.cov.}) and (\ref{eq.hor.cov.}), we conclude that $r$ is
constant. Consequently, the proof is complete.
\end{proof}

Finally, we provide a global proof of the Numata's theorem \cite{Numata} for Finsler manifold of a non-vanishing scalar curvature by incorporating previous results.
\begin{thm}\label{thm.1}  If $(M,L)$ is a Landsberg manifold of a non-vanishing scalar curvature $r$ with $n\geq 3$, then it is a Riemannian manifold of constant curvature.
\end{thm}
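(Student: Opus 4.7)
The proof will simply be a synthesis of the three main technical results already established in the paper, namely Proposition \ref{prop.3}, Theorem \ref{prop.2} and Theorem \ref{thm.3}. The plan is to chain them together in the obvious order so as to cascade from the Landsberg hypothesis down to the Riemannian-of-constant-curvature conclusion.

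First, I would invoke Proposition \ref{prop.3}: since $(M,L)$ is assumed to be Landsberg with non-vanishing scalar curvature $r$ and $n \geq 3$, that proposition directly yields the $C$-reducibility of $(M,L)$. This reduces the problem to the $C$-reducible Landsberg case, which is precisely the setting of Theorem \ref{prop.2}.

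Next, I would apply Theorem \ref{prop.2} to this $C$-reducible Landsberg manifold of dimension $n \geq 3$. By that theorem, $(M,L)$ is either Berwaldian or Riemannian. Since every Riemannian manifold is in particular a Berwald manifold (the Berwald $hv$-curvature vanishes trivially when the Cartan torsion $\mathbf{T}$ is zero), the two alternatives collapse into a single one: $(M,L)$ is a Berwald manifold of non-vanishing scalar curvature $r$ with $n \geq 3$.

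Finally, I would apply Theorem \ref{thm.3} to this Berwald manifold, which immediately gives that $(M,L)$ is a Riemannian manifold of constant curvature. Since every step is a direct appeal to a previously established result and no new computation is required, there is no real obstacle; the only thing to be careful about is to check that the hypotheses of each invoked result—non-vanishing scalar curvature and the dimensional restriction $n \geq 3$—are preserved along the chain, which they plainly are because they coincide with the hypotheses of Theorem \ref{thm.1} itself.
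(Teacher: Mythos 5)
Your proposal is correct and follows essentially the same route as the paper, whose proof of Theorem \ref{thm.1} is exactly the chain Proposition \ref{prop.3} $\rightarrow$ Theorem \ref{prop.2} $\rightarrow$ Theorem \ref{thm.3}; your additional observation that the Riemannian alternative is subsumed in the Berwald case is a sensible way to make the final appeal to Theorem \ref{thm.3} uniform.
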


\begin{proof}  The proof follows from Theorem \ref{prop.2}, Proposition \ref{prop.3} and Theorem \ref{thm.3}.
\end{proof}


\providecommand{\bysame}{\leavevmode\hbox
to3em{\hrulefill}\thinspace}
\providecommand{\MR}{\relax\ifhmode\unskip\space\fi MR }
\providecommand{\MRhref}[2]{%
  \href{http://www.ams.org/mathscinet-getitem?mr=#1}{#2}
} \providecommand{\href}[2]{#2}

\end{document}